\newtheorem{theorem}{Theorem}[section]
\newtheorem{corollary}[theorem]{Corollary}
\theoremstyle{remark}
\newtheorem{remark}{Remark}[section]
\theoremstyle{definition}
\theoremstyle{definition}
\newtheorem{example}{Example}[section]
\begin{document}

\markboth{Regularity of solutions to higher-order integrals of the
calculus of variations}{Regularity of solutions to higher-order
integrals of the calculus of variations}

\title{Regularity of solutions to higher-order integrals\\
of the calculus of variations}

\author{Moulay Rchid Sidi Ammi\\
        \url{sidiammi@mat.ua.pt}
        \and
        Delfim F. M. Torres\\
        \url{delfim@mat.ua.pt}}

\date{Department of Mathematics\\
University of Aveiro\\
3810-193 Aveiro, Portugal}

\maketitle

\begin{abstract}
We obtain new regularity conditions for problems of calculus of
variations with higher-order derivatives. As a corollary, we get
non-occurrence of the Lavrentiev phenomenon. Our main regularity
result asserts that autonomous integral functionals with a
Lagrangian having coercive partial derivatives with respect to the
higher-order derivatives admit only minimizers with essentially
bounded derivatives.
\end{abstract}


\smallskip

\textbf{Mathematics Subject Classification 2000:} 49N60; 49J30; 49K05.

\smallskip


\smallskip

\textbf{Keywords:} Optimal control; Calculus of variations; Higher
order derivatives; Regularity of solutions; Non-occurrence of the
Lavrentiev phenomenon.

\medskip


\section{Introduction and preliminaries}

Let $\mathcal{L}(t, x^{0},\ldots, x^{m})$ be a given $C^{1}([a, b]
\times \mathbb{R}^{(m+1)\times n})$ real function. The problem of
the calculus of variations with high-order derivatives consists in
minimizing an integral functional
\begin{equation}
\label{Pm} I[x(\cdot)]= \int_{a}^{b}\mathcal{L}\left(t, x(t),
\dot{x}(t), \ldots, x^{(m)}(t)\right) dt  \tag{$P_{m}$}
\end{equation}
over a certain class $\mathcal{X}$ of functions $x : [a, b]
\rightarrow \mathbb{R}^{n}$ satisfying the boundary conditions
\begin{equation}
\label{eq:boundaryCond} x(a)= x_{a}^{0}\, ,  x(b)= x_{b}^{0}\, ,
\ldots\, , x^{(m-1)}(a)= x_{a}^{m-1}\, , x^{(m-1)}(b)= x_{b}^{m-1}
\, .
\end{equation}
Often it is convenient to write $x^{(1)}= x'$, $x^{(2)}= x''$, and
sometimes we revert to the standard notation used in mechanics:
$x'=\dot{x}$, $x''= \ddot{x}$. Such problems arise, for instance,
in connection with the theory of beams and rods \cite{smi}.
Further, many problems in the calculus of variations with
higher-order derivatives describe important optimal control
problems with linear dynamics \cite{SarychevTorres2}.

Regularity theory for optimal control problems is a fertile field
of research and a source of many challenging mathematical issues
and interesting applications \cite{cla,torres2,torres3,Margarida}.
The essential points in the theory are: (i) existence of
minimizers and (ii) necessary optimality conditions to identify
those minimizers.

The first systematic approach to existence theory was introduced
by Tonelli in 1915 \cite{ton}, who showed that existence of
minimizers is guaranteed in the Sobolev space $W^{m}_{m}$ of
absolutely continuous functions. The direct method of Tonelli
proceeds in three steps: (i) regularity, and convexity with
respect to the highest-derivative of the Lagrangian $\mathcal{L}$,
guarantees lower semi-continuity; (ii) the coercivity condition
(the Lagrangian $\mathcal{L}$ must grow faster than a linear
function) implies that minimizing sequences lie in a compact set;
(iii) thus, by the compactness principle, one gets directly from
(i) and (ii) the existence of minimizers for the problem
\eqref{Pm}. Typically, Tonelli's existence theorem for \eqref{Pm}
is formulated as follows.\footnote{In our context (H1) is
trivially satisfied since we are assuming $\mathcal{L}$ to be a
$C^1$ function. It is customary to choose the function $\theta$ in
hypothesis (H3) as $\theta(r) = a r^2 + b$ for some strictly
positive constants $a$ and $b$. We then say that $\mathcal{L}$ is
\emph{quadratically coercive}.}

\begin{theorem} (see \textrm{e.g.} \cite{cla,clavin90,bookVinter})
Under hypotheses (H1)-(H3) on the
Lagrangian $\mathcal{L}$,
\begin{enumerate}
\item[(H1)] $\mathcal{L}(t, x^{0},\ldots, x^{m})$
is locally Lipschitz in $(t, x^{0},\ldots, x^{m})$;
\item[(H2)] $\mathcal{L}(t, x^{0},\ldots, x^{m})$
is convex as a function of the last argument $x^{m}$;
\item[(H3)] $\mathcal{L}(t, x^{0},\ldots, x^{m})$
is coercive in $x^{m}$, \textrm{i.e.}
$\exists$ $\theta : [0, \infty)\rightarrow \mathbb{R}$ such that
\begin{gather*}
\lim_{r \rightarrow \infty} \frac{\theta (r)}{r}=+\infty \, , \\
\mathcal{L}(t, x^{0},\ldots, x^{m}) \geq \theta (|x^{m}|) \mbox{
for all } (t, x^{0},\ldots, x^{m}) \, ,
\end{gather*}
\end{enumerate}
there exists a minimizer to problem \eqref{Pm} in the class
$W^{m}_{m}$.
\end{theorem}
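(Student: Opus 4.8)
The plan is to run the direct method of Tonelli along the three steps indicated above: confine a minimizing sequence to a weakly sequentially compact set via coercivity and extract a candidate $x_*$; strengthen the convergence of the lower-order derivatives; and prove lower semicontinuity of $I$ along the sequence, which then identifies $x_*$ as a minimizer. Throughout, let $\mathcal X$ denote the admissible class of $W^m_m$-functions satisfying \eqref{eq:boundaryCond}. First I would record two elementary facts: $\mathcal X \ne \emptyset$, since Hermite interpolation produces a polynomial of degree at most $2m-1$ meeting the $2m$ conditions in \eqref{eq:boundaryCond}; and, by (H3) --- where I assume, as is customary and as holds automatically in the quadratically coercive case, that $\theta$ is bounded below, so that after adding a constant to $\mathcal L$ we may take $\theta \ge 0$ --- the functional $I$ is bounded below on $\mathcal X$. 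Hence $\mu := \inf\{\, I[x(\cdot)] : x \in \mathcal X \,\}$ is finite; fix a minimizing sequence $(x_k) \subset \mathcal X$ with $I[x_k] \to \mu$.

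\emph{Step 1 (compactness).} From (H3), $\int_a^b \theta(|x_k^{(m)}(t)|)\,dt \le I[x_k] \le C$ for all $k$. Since $\theta$ is superlinear, a de la Vall\'ee Poussin-type estimate shows that $\{x_k^{(m)}\}$ is bounded and uniformly integrable in $L^1(a,b;\mathbb{R}^n)$, so by the Dunford--Pettis theorem a subsequence (not relabelled) satisfies $x_k^{(m)} \rightharpoonup u$ weakly in $L^1$. Using the left endpoint data from \eqref{eq:boundaryCond} and Taylor's formula with integral remainder,
\[
x_k^{(j)}(t) = \sum_{i=j}^{m-1}\frac{x_a^i}{(i-j)!}\,(t-a)^{i-j} + \int_a^t \frac{(t-s)^{m-1-j}}{(m-1-j)!}\,x_k^{(m)}(s)\,ds, \qquad 0 \le j \le m-1,
\]
I would define $x_*$ by the same identity with $u$ in place of $x_k^{(m)}$. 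Testing the weak convergence against the bounded kernel $(t-s)^{m-1-j}/(m-1-j)!$ gives $x_k^{(j)}(t) \to x_*^{(j)}(t)$ for each $t$, and the uniform integrability makes these integrals equicontinuous in $t$, so Arzel\`a--Ascoli upgrades this to $x_k^{(j)} \to x_*^{(j)}$ uniformly on $[a,b]$ for $j = 0,\ldots,m-1$. Thus $x_* \in W^m_m$ with $x_*^{(m)} = u$; the left conditions in \eqref{eq:boundaryCond} hold by construction, and the right conditions pass to the limit since $x_k^{(j)}(b) \to x_*^{(j)}(b)$. Hence $x_* \in \mathcal X$.

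\emph{Steps 2--3 (lower semicontinuity).} Write $y_k := (x_k, \dot x_k, \ldots, x_k^{(m-1)})$ and $v_k := x_k^{(m)}$, so that $y_k \to y_* := (x_*, \ldots, x_*^{(m-1)})$ uniformly and $v_k \rightharpoonup x_*^{(m)}$ weakly in $L^1$; it remains to show $I[x_*] \le \liminf_k I[x_k] = \mu$. Since $\mathcal L$ is continuous in all its arguments (H1), convex in the last one (H2), and $\mathcal L \ge \theta(|v|) \ge 0$, I would invoke the classical Tonelli--Serrin--Ioffe lower semicontinuity theorem for integral functionals under strong convergence of the state variables and weak-$L^1$ convergence of the highest derivative; it gives $\int_a^b \mathcal L(t, y_*, x_*^{(m)})\,dt \le \liminf_k \int_a^b \mathcal L(t, y_k, v_k)\,dt$, i.e.\ $I[x_*] \le \mu$. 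Together with $x_* \in \mathcal X$ this forces $I[x_*] = \mu$, so $x_*$ is the desired minimizer.

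I expect the lower semicontinuity step to be the main obstacle. The naive argument --- combining the subgradient inequality $\mathcal L(t,y_k,v_k) \ge \mathcal L(t,y_k,x_*^{(m)}) + \langle \partial_{x^m}\mathcal L(t,y_k,x_*^{(m)}),\, v_k - x_*^{(m)}\rangle$ with the weak convergence of $v_k$ --- does not close, because $\partial_{x^m}\mathcal L(t,y_k(t),x_*^{(m)}(t))$ need not lie in $L^\infty$ ($x_*^{(m)}$ is only $L^1$ and $\mathcal L$ is merely $C^1$, with no growth control on its $x^m$-derivative). The standard way around this, which I would follow and cite from \cite{cla,clavin90,bookVinter}, is: represent the nonnegative Carath\'eodory integrand $v \mapsto \mathcal L(t,y,v)$ as a countable supremum of functions affine in $v$ with continuous coefficients; pass to the limit for each finite maximum $\mathcal L_N$ of such affine pieces (there the coefficients are bounded on the compact range of the $y_k$, weak-$L^1$ convergence of $v_k$ is enough, and Egorov's theorem handles the measurable $t$-dependence); then let $N \to \infty$ by monotone convergence. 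The remaining ingredients --- nonemptiness of $\mathcal X$, the de la Vall\'ee Poussin / Dunford--Pettis extraction, and the passage to the limit in the boundary conditions --- are routine.
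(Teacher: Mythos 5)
The paper offers no proof of this statement: it is Tonelli's classical existence theorem, quoted with citations to \cite{cla,clavin90,bookVinter}, and the only ``proof'' in the paper is the three-step prose sketch of the direct method in the introduction. Your argument is precisely that direct method, correctly executed: coercivity plus de la Vall\'ee Poussin and Dunford--Pettis give weak $L^1$ compactness of $\{x_k^{(m)}\}$, the Taylor representation with integral remainder upgrades this to uniform convergence of the lower-order derivatives and passes the boundary conditions to the limit, and the Tonelli--Serrin--Ioffe semicontinuity theorem (with the approximation by countable suprema of affine functions, correctly preferred over the naive subgradient argument, whose failure you rightly diagnose) closes the argument. This matches the standard proofs in the cited references.

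One point deserves flagging. Your construction delivers $x_*^{(m)} = u$ only as a weak $L^1$ limit, hence $x_* \in W^{m}_{1}$; the theorem as stated asserts membership in $W^{m}_{m}$, i.e.\ $x_*^{(m)} \in L^m$. To conclude that, one needs either to read (H3) with $\theta(r) \geq a r^m + b$ (so that $\{x_k^{(m)}\}$ is bounded in $L^m$ and the weak limit can be taken there), or to accept the weaker conclusion $x_* \in W^{m}_{1}$. For the quadratically coercive case of the paper's footnote and $m=2$ this is automatic; for general $m$ and general superlinear $\theta$ it is a genuine (if minor and well-known) gap in the statement as quoted, inherited by your proof, and worth one sentence of acknowledgement.
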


The main necessary condition in optimal control is the famous
Pontryagin maximum principle, which includes all the classical
necessary optimality conditions of the calculus of variations
\cite{Pontryagin}. It turns out that the hypotheses (H1)-(H3) do
not assure the applicability of the necessary optimality
conditions, being required more regularity on the class of
admissible functions \cite{BallMizel}. For \eqref{Pm}, the
Pontryagin maximum principle \cite{Pontryagin} is established
assuming $x \in W_{m}^{\infty} \subset W^{m}_{m}$.

In the case $m=1$, extra information about the minimizers was
proved, for the first time, by Tonelli himself \cite{ton}. Tonelli
established that, under the hypotheses (H2) and (H3) of convexity
and coercivity, the minimizers $x$ have the property that
$\dot{x}$ is locally essentially bounded on an open subset $\Omega
\subset [a, b]$ of full measure. If the following Tonelli-Morrey
regularity condition \cite{clavin,SarychevTorres2,FrancisClarke}
\begin{equation}
\label{eq:conTonelliMorrey} \left|\frac{\partial
\mathcal{L}}{\partial x}\right| + \left|\frac{\partial
\mathcal{L}}{\partial \dot{x}}\right| \le c |\mathcal{L}| + r \, ,
\end{equation}
 is satisfied for some constants $c$ and $r$, $c > 0$, then $\Omega = [a,b]$
($\dot{x}(t)$ is essentially bounded in all points $t$ of $[a,b]$,
\textrm{i.e.} $x \in W_{1}^{\infty}$), and the Pontryagin maximum
principle, or the necessary condition of Euler-Lagrange, hold.
Since L.~Tonelli and C.~B.~Morrey \cite{Morrey}, several
Lipschitzian regularity conditions were obtained for the problem
\eqref{Pm} with $m = 1$: S.~Bernstein \cite{Bernstein}, for the
scalar case $n = 1$, F.~H.~Clarke and R.~B.~Vinter
\cite{clavin85}, for the vectorial case $n > 1$, obtained  the
condition
\begin{equation*}
\left|\left(\frac{\partial^2 \mathcal{L}}{\partial
\dot{x}^2}\right)^{-1} \left( \frac{\partial \mathcal{L}}{\partial
x} -\frac{\partial^2 \mathcal{L}}{\partial \dot{x} \partial t}
-\frac{\partial^2 \mathcal{L}}{\partial \dot{x} \partial x}
\dot{x}\right)\right| \le c \left(\left|\dot{x}\right|^3 + 1\right)
\, , \quad \frac{\partial^2 \mathcal{L}}{\partial \dot{x}^2} > 0 \,
;
\end{equation*}
F.~H.~Clarke and R.~B.~Vinter \cite{clavin85} the regularity
conditions
\begin{equation}
\label{eq:aut} \left|\frac{\partial \mathcal{L}}{\partial t}\right|
\le c \left|\mathcal{L}\right| + k(t) \, , \quad k(\cdot) \in L_1 \,
,
\end{equation}
and
\begin{equation*}
\left|\frac{\partial \mathcal{L}}{\partial x}\right| \le c
\left|\mathcal{L}\right| + k(t) \left|\frac{\partial
\mathcal{L}}{\partial \dot{x}}\right| + m(t) \, , \quad k(\cdot), \,
m(\cdot) \in L_1 \, ;
\end{equation*}
and A.~V.~Sarychev and D.~F.~M.~Torres \cite{SarychevTorres1} the
condition
\begin{equation}
\label{eq:MSc} \left(\left|\frac{\partial \mathcal{L}}{\partial
t}\right| + \left|\frac{\partial \mathcal{L}}{\partial x}\right|
\right) \left|\dot{x}\right|^{\mu} \le \gamma \mathcal{L}^\beta +
\eta \, , \quad \gamma > 0 \, , \beta < 2 \, , \mu \ge \max
\left\{\beta-1,-1\right\} \, .
\end{equation}
Lipschitzian regularity theory for the problem of the calculus of
variations with $m = 1$ is now a vast discipline (see
\textrm{e.g.}
\cite{CellinaFerriero,DalMaso,Cellina,Ornelas,torres3} and
references therein). Results for $m > 1$ are scarcer: we are aware
of the results in \cite{clavin90,SarychevTorres1,torres1}. In 1997
A.V.~Sarychev \cite{Sarychev} proved that the second-order
problems of the calculus of variations may show new phenomena
non-present in the first-order case: under the hypotheses
(H1)-(H3) of Tonelli's existence theory, autonomous problems
\eqref{Pm} with $m = 2$ may present the Lavrentiev phenomenon
\cite{Lav}. This is not a possibility for $m = 1$, as shown by the
Lipschitzian regularity condition \eqref{eq:aut}. Sarychev's
result was recently extended by A.~Ferriero \cite{Ferriero} for
the case $m > 2$. It is also shown in \cite{Ferriero} that, under
some standard hypotheses, the problems of the calculus of
variations \eqref{Pm} with Lagrangians only depending on two
consecutive derivatives $x^{(\gamma)}$ and $x^{(\gamma+1)}$,
$\gamma \ge 0$, do not exhibit the Lavrentiev phenomenon for any
boundary conditions \eqref{eq:boundaryCond} (for $m = 1$, this
follows immediately from \eqref{eq:aut}). In the case in which the
Lagrangian only depends on the higher-order derivative $x^{(m)}$,
it is possible to prove more \cite[Corollary~2]{SarychevTorres1}:
when $\mathcal{L} = \mathcal{L}\left(x^{(m)}\right)$, all the
minimizers predicted by the existence theory belong to the space
$W_{m}^{\infty} \subset W^{m}_{m}$ and satisfy the Pontryagin
maximum principle (regularity). As to whether this is the case or
not for Ferriero's problem with Lagrangians only depending on
consecutive derivatives $x^{(\gamma)}$ and $x^{(\gamma+1)}$, seems
to be an open question.

The results of Sarychev \cite{Sarychev} and Ferriero
\cite{Ferriero} on the Lavrentiev phenomenon show that the
problems of the calculus of variations with higher-order
derivatives are richer than the problems with $m = 1$, but also
show, in our opinion, that the regularity theory for higher-order
problems is underdeveloped. One can say that the Lipschitzian
regularity conditions found in the literature for the higher-order
problems of the calculus of variations are a generalization of the
above mentioned conditions for $m = 1$: \cite{clavin90}
generalizes \cite{clavin85} for $m > 1$, \cite{SarychevTorres1}
generalizes \eqref{eq:MSc} for problems of optimal control with
control-affine dynamics, \cite{torres1} generalizes
\eqref{eq:conTonelliMorrey} for optimal control problems with more
general nonlinear dynamics. To the best of our knowledge, there
exist no regularity conditions for the higher-order problems of
the calculus of variations of a different type from those also
obtained (also valid) for the first-order problems. We prove here
a new regularity condition which is of a different nature than
those appearing for the first-order problems. The results of the
paper extend those found in \cite{siddel}, covering problems of
the calculus of variations with derivatives of higher order than
two. While existence follows by imposing coercivity to the
Lagrangian $\mathcal{L}$ (hypothesis (H3)), we prove (\textrm{cf.}
Theorem~\ref{theorem4}) that for the autonomous high-order
problems of the calculus of variations, regularity follows by
imposing a superlinear condition with respect to the sum of the
partial derivatives $\frac{\partial \mathcal{L}}{\partial
{x}^{(m)}_i}$ of the Lagrangian. We observe that our condition is
intrinsic to the higher-order problems: for autonomous problems of
the calculus of variations with $m = 1$ \eqref{eq:aut} is
trivially satisfied and no coercivity on the partial derivatives
$\frac{\partial \mathcal{L}}{\partial \dot{x}}$ are needed. Our
condition is, however, necessary, as a consequence of Sarychev's
results \cite{Sarychev}.


\section{Outline of the paper and hypotheses}

In Section~\ref{sec:nec:cond} we establish a generalized integral
form of duBois-Reymond necessary condition, valid in the class
$\mathcal{X} = W^{m}_{m}$ (we recall that the optimal solutions
$x$ may have unbounded derivatives). In Section~\ref{sec:Reg} we
make use of our duBois-Reymond necessary condition to obtain
regularity conditions under which all the minimizers of \eqref{Pm}
are in $W_{m}^{\infty} \subset W^{m}_{m}$ and thus satisfy the
classical necessary conditions. Then, in Section~\ref{subsec:EL},
arguments analogous to those used to prove the dubois-Reymond
necessary condition in \S\ref{sec:nec:cond} are used to prove an
Euler-Lagrange necessary condition valid for non-regular
minimizers $W^{m}_{m}$ not in $W_{m}^{\infty}$. In general terms,
one can say that the techniques used here are an extension of
those appearing in \cite{ces} and \cite{siddel}.

\medskip

In the sequel we shall assume the following hypotheses, where
$x(\cdot) \in W^{m}_{m}$ is the minimizer under consideration:

\begin{description}

\item[$(S_{0})$] There exists a continuous function $S\geq 0$,
$(t, x^{0}, x^{1},\ldots, x^{m} ) \in \mathbb{R}^{1+ (m+1)\times
n}$, and some $\delta
> 0$, such that $t \rightarrow S(t, x^{0}(t), \ldots, x^{m}(t))$ is
$L^{m'}$-integrable in $[a, b]$, $m'$ being the Holder conjugate
of $m$ ($\frac{1}{m}+\frac{1}{m'}=1$) and
$$
\left|\frac{\partial \mathcal{L}}{\partial t}(\tau, x^{0}, x^{1},
\ldots, x^{m})\right| \leq S(t, x^{0}, x^{1}, \ldots, x^{m}),
$$
for any $t \in [a, b]$, $\left|\tau -t\right|< \delta$,
$x^{i}=x^{i}(t)$, $0\leq i \leq m$.

\item[$(S_{i})$] There exists a nonnegative continuous function $G$,
and some $\delta >0$, such that $t \rightarrow G(t, x^{0}(t),
x^{1}(t),\ldots,  x^{m}(t))$ is $L^{m'}$-integrable on $[a, b]$, and
\begin{equation*}
\begin{split}
\left|\frac{\partial \mathcal{L}}{\partial x_{i}^{(k)}} (t, x^{0},
x^{1},\ldots, x^{k-1}, y, x^{k+1}, \ldots,  x^{m})\right| &\leq G(t,
x^{0}, \ldots, x^{m})\, ,
 \end{split}
\end{equation*}
for any $t \in [a, b]$, $x$, $x^{1}$, $\ldots$,  $x^{m} \in
\mathbb{R}^{n}$, $x=(x_{1},\ldots, x_{n})\in \mathbb{R}^{n}$,
$y=(y_{1},\ldots, y_{n})\in \mathbb{R}^{n}$, $y_{j}= x^{k}_{j}(t)$
for $j\neq i$,  $ \left|y_{i}-x_{i}^{k}(t)\right|\leq \delta$,
$i=1, \ldots, n$ and $k=0, 1, 2, \ldots, m$, where $x_{i}^{k}(t)$
is the $i^{th}$ component of the $k^{th}$ vector with the
convention $x_{i}^{0}(t)=x_{i}(t)$.
\end{description}

\begin{remark}
Hypothesis $(S_{0})$ is certainly verified if $\mathcal{L}$ does
not depend on $t$: $(S_{0})$ holds trivially in the autonomous
case. Conditions $(S_{i})$, $i = 0,\ldots,n$, are needed in the
proof of Theorems~\ref{theorem1} and \ref{theorem2} to justify the
usual rule of differentiation under the sign of an integral.
\end{remark}


\section{Generalized duBois-Reymond equation}
\label{sec:nec:cond}

In this section we prove an integral form of the duBois Reymond
equation (equality \eqref{eq2} of Theorem~\ref{theorem1} below).
For this, we consider an arbitrary change of the independent
variable $t$. Let $s$ be the arc length parameter on the curve
$C_{0}: x = x(t)$, $a \leq t \leq b$, so that the Jordan length of
$C_{0}$ is $s(t)=\int_{a}^{t} \sqrt{ 1+(x'(\tau))^{2}}d\tau$ with
$s(a)=0$, $s(b)=l$ and $s(t)$ is absolutely continuous with
$s'(t)\geq 1$ $a.e.$ Thus $s(t)$ and its inverse $t(s)$, $0\leq s
\leq l$, are absolutely continuous with $t'(s)>0$ $a.e.$ in $[0,
l]$. If $X(s)= x(t(s))$, $0 \leq s \leq l$, then $t(s)$ and $X(s)$
are Lipschitzian of constant one in $[0, l]$. By the usual change
of variable,
\begin{equation*}
\begin{split}
I[x] &= \int_{a}^{b}\mathcal{L}\left(t, x(t),\dot{x}(t) , \ldots, x^{(m)}(t) \right) dt \\
&= \int_{0}^{l} \mathcal{L}\Biggl(t(s), X(s),\frac{X'(s)}{t'(s)},
\sum_{i=1}^{2}P_{i2}(t', t'')X^{(i)}, \\
&\qquad\qquad\qquad \dots,\sum_{i=1}^{m}P_{im}(t', t'', \ldots, t^{(m)})X^{(i)}\Biggr)t'(s) ds
\, ,
\end{split}
\end{equation*}
where $P_{ik}$, $1\leq k \leq m$, are  functions on $(t', t'',
\ldots, t^{(k)})$, obtained by differentiating $X(s)$ $k-$times
and replacing the derivatives $x^{i}(t(s))$ by $X^{i}(s)$,
$i=1,\dots, k-1$. Setting
\begin{multline*}
F(t, x, t', x', t'', x'', \ldots, t^{(m)}, x^{(m)}) \\
= \mathcal{L}\Biggl(t, x,\frac{x'}{t'}, \sum_{i=1}^{2}P_{i2}(t',
t'')x^{(i)}, \ldots, \sum_{i=1}^{k}P_{ik}(t', t'', \ldots,t^{(k)})x^{(i)},\\
\ldots \sum_{i=1}^{m}P_{im}(t', t'', \ldots,
t^{(m)})x^{(i)} \Biggr)t',
\end{multline*}
then we have:
$$
I[x]= J[C]= J[X] =\int_{0}^{l}F\left(t(s), X(s), t'(s), X'(s),
\ldots,  t^{(m)}(s), X^{(m)}(s)\right) ds \, .
$$
Thus, after reparameterization by length, the cost functional can
be considered as a functional $J[C]$ in the space of curves,
rather than a functional in the space of functions $W^{m}_{m}$.

\begin{remark}
For $m=2$, we have $F(t, x, t', x', t'', x'') =
\mathcal{L}\left(t, x,\frac{x'}{t'},\frac{1}{t^{'2}}x''-
\frac{t''}{t'^{3}}x'\right)t'$.
\end{remark}

The following necessary condition will be useful to prove our
regularity theorem (Theorem~\ref{theorem4}).

\begin{theorem}\label{theorem1}
Under hypotheses $(S_{i})_{0 \leq i \leq n}$, if $x(\cdot) \in
W_m^m$ is a minimizer of problem \eqref{Pm}, then the following
integral form of duBois-Reymond necessary condition holds:
\begin{align}
&\phi_{0}(s)= \frac{\partial F}{\partial t^{(m)}}
 + \sum_{i=1}^{m} (-1)^{m-i+1}\int_{0}^{s} \int_{0}^{\tau_{1}} \ldots
  \int_{0}^{\tau_{m-i}} \frac{\partial F}{\partial t^{(i-1)}}d\sigma \,
    d\tau_{m-i} \ldots d\tau_{1} =c_{0}
 , \label{eq2}
\end{align}
where $0 \leq \tau_{i} \leq s \leq l$, $c_{0}$ is a constant, and
functions $\frac{\partial F}{\partial t^{(i)}}$, $1\leq i \leq m$,
are evaluated at $(t(s), X(s), t'(s), X'(s), \ldots t^{(m)}(s),
X^{(m)}(s))$.
\end{theorem}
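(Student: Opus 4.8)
The plan is to derive \eqref{eq2} by a variational argument applied to the reparameterized functional $J[X]$, perturbing the independent-variable component $t(s)$ while keeping $X(s)$ fixed. Since $x(\cdot)$ is a minimizer of \eqref{Pm} and the reparameterization is a bijection between admissible functions and admissible curves (with the length parameter fixed), the curve $C_0$ is a minimizer of $J[C]$ among curves with the same endpoints in the $(t,x)$-space and the same boundary data for the derivatives. First I would fix a smooth variation $\eta(s)$ of the $t$-component that vanishes together with its derivatives up to order $m-1$ at $s=0$ and $s=l$, so that $t_\varepsilon(s)=t(s)+\varepsilon\eta(s)$ produces an admissible competitor for all small $\varepsilon$ (here one uses $t'(s)>0$ a.e.\ to keep $t_\varepsilon'(s)>0$). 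Differentiating $J[t_\varepsilon(\cdot),X(\cdot)]$ at $\varepsilon=0$ and using hypotheses $(S_i)$ to justify differentiation under the integral sign, the first variation must vanish:
\[
\int_0^l \left( \frac{\partial F}{\partial t}\,\eta + \frac{\partial F}{\partial t'}\,\eta' + \cdots + \frac{\partial F}{\partial t^{(m)}}\,\eta^{(m)} \right) ds = 0 .
\]

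Next I would convert this into the stated integrated (duBois–Reymond) form rather than the differentiated (Euler–Lagrange) form, because the coefficients $\partial F/\partial t^{(i)}$ are only known to be $L^{m'}$-type functions and need not be differentiable. The standard device is to integrate by parts \emph{in the reverse direction}: write each $\eta^{(i)}$, $0\le i\le m-1$, as an iterated integral of $\eta^{(m)}$ — or, more directly, repeatedly integrate by parts so that every term is expressed against $\eta^{(m)}$. Concretely, for the term with $\partial F/\partial t^{(i-1)}$ one integrates by parts $m-i+1$ times, moving derivatives off $\eta$; the boundary terms drop out because $\eta$ and its derivatives up to order $m-1$ vanish at both ends, and each integration by parts introduces one iterated antiderivative $\int_0^s$ and one sign $(-1)$, producing exactly the nested integrals $\int_0^s\int_0^{\tau_1}\cdots\int_0^{\tau_{m-i}}$ with the alternating sign $(-1)^{m-i+1}$. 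After collecting terms the identity reads
\[
\int_0^l \phi_0(s)\,\eta^{(m)}(s)\, ds = 0
\]
for all such $\eta$, where $\phi_0$ is precisely the bracketed expression in \eqref{eq2}. Since $\eta^{(m)}$ ranges over a dense enough class (all $L$-functions arising as the $m$-th derivative of functions vanishing to order $m-1$ at the endpoints — equivalently all $h$ with $\int_0^l\int_0^{\sigma_1}\cdots\,h = 0$ killed off appropriately), the fundamental lemma of the calculus of variations in its duBois–Reymond form forces $\phi_0(s)=c_0$ a.e.\ for some constant $c_0$.

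The main obstacle I expect is twofold. First, the bookkeeping in the repeated integration by parts: one must be careful that the iterated-integral operator applied $m-i+1$ times to $\partial F/\partial t^{(i-1)}$ lands with the correct index $\tau_{m-i}$ (not $\tau_{m-i+1}$) and the correct sign, and that the various boundary terms at intermediate orders genuinely vanish under the stated endpoint conditions on $\eta$ — this requires choosing the variation space for $\eta$ precisely. Second, one must check that all the partial derivatives $\partial F/\partial t^{(i)}$ are in $L^1(0,l)$ (indeed, that is what makes the iterated integrals well-defined and continuous in $s$): this is where $(S_0)$ and $(S_i)$ enter, since $F$ is built from $\mathcal{L}$ composed with the polynomial coefficients $P_{ik}(t',\ldots,t^{(k)})$, and one needs the growth bounds on $\partial\mathcal{L}/\partial t$ and $\partial\mathcal{L}/\partial x^{(k)}$ together with the fact that $t(s),X(s)$ are Lipschitz of constant one to control the composed derivatives. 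Verifying the integrability and the legitimacy of differentiating under the integral sign — essentially a dominated-convergence argument driven by the $L^{m'}$ bounds — is the technical heart of the argument; the algebra of the integration by parts, while tedious, is routine once that is in place.
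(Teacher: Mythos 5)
Your overall strategy coincides with the paper's: both proofs perturb only the time component $t(s)$ of the reparameterized curve while keeping $X(s)$ fixed, use the hypotheses $(S_i)$ to justify differentiation under the integral sign, and move derivatives onto the variation by repeated integration by parts so that everything is tested against its $m$-th derivative. A cosmetic difference is that the paper never invokes a fundamental lemma: it argues by contradiction, building an explicit variation $\psi$ as the $m$-fold iterated integral of $|E_2|\chi_{E_1}-|E_1|\chi_{E_2}$, where $E_1,E_2$ are positive-measure subsets of $\{\phi_0\le d_1\}$ and $\{\phi_0\ge d_2\}$ on which additionally $t'\ge k>0$ (this last restriction is what lets one keep $t_\alpha'>0$ on the support of $\psi^{(m)}$, a point your sketch passes over when it appeals to $t'>0$ a.e., which gives no uniform lower bound).

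The substantive gap is in your final step. You impose $\eta^{(j)}(0)=\eta^{(j)}(l)=0$ for all $j=0,\dots,m-1$ --- which is indeed what admissibility of the competitor requires --- and then conclude from $\int_0^l\phi_0\,\eta^{(m)}\,ds=0$ that $\phi_0$ is constant. But the set of functions $h=\eta^{(m)}$ arising from such $\eta$ is exactly $\{h:\int_0^l s^j h(s)\,ds=0,\ j=0,\dots,m-1\}$, whose annihilator is the space of polynomials of degree at most $m-1$; so the du Bois--Reymond lemma only yields $\phi_0(s)=c_0+c_1s+\dots+c_{m-1}s^{m-1}$, not the constancy claimed in \eqref{eq2}. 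For $m=1$ your argument closes, but that is not the case of interest here. To force a constant one must let $\eta^{(m)}$ range over essentially all mean-zero functions, i.e.\ constrain only $\eta^{(m-1)}$ at the right endpoint --- which is what the paper's $\psi$ does ($\psi^{(j)}(0)=0$ for all $j\le m-1$ and $\psi^{(m-1)}(l)=0$, while $\psi(l),\dots,\psi^{(m-2)}(l)$ are in general nonzero). That larger variation class brings its own admissibility and boundary-term complications, which the paper's construction is designed to address; your sketch, by choosing the safe variation class, avoids those complications but proves a strictly weaker statement than the theorem asserts.
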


\begin{remark}
For $m=2$ \eqref{eq2} takes the following form:
\begin{equation*} \phi_{0}(s)=
\frac{\partial F}{\partial t''}- \int_{0}^{s}\frac{\partial
F}{\partial t'}+ \int_{0}^{s}\int_{0}^{\tau}\frac{\partial
F}{\partial t}= c_{0}, \quad 0\leq \tau \leq s \leq l \, .
\end{equation*}
\end{remark}

\begin{proof}
It is to be noted that $(t(s), X(s), t'(s), \ldots,  X^{(m)}(s),
t^{(m)}(s))$ may not exist in a set of null-measure of all $s$. The
proof is done by contradiction. Suppose that \eqref{eq2} is not
true. Then, there exist constants $d_{1}<d_{2}$ and disjoints sets
$E_{1}^{*}$ and $E_{2}^{*}$ of non-zero measure such that
\begin{gather}
\phi_{0}(s)\leq d_{1} \mbox{ for } s \in
E_{1}^{*}, \nonumber\\
\phi_{0}(s)\geq  d_{2} \mbox{ for } s \in E_{2}^{*}, \nonumber
\end{gather}
while $t'(s)>0$ $a.e$ in $[0, l]$. Hence, there exist some
constant $k>0$ and two subsets $E_{1}$, $E_{2}$ of positive
measure of $E_{1}^{*}$, $E_{2}^{*}$, such that
\begin{gather}
t'(s)\geq k>0, \quad \phi_{0}(s)\leq d_{1} \quad \mbox{ for } s \in
E_{1}, \quad \left|E_{1}\right|>0 \, , \label{eq3}\\
t'(s)\geq k>0,  \quad \phi_{0}(s)\geq  d_{2}  \quad \mbox{ for } s
\in E_{2},  \quad \left|E_{2}\right|>0 \, . \label{eq4}
\end{gather}
Let us consider
$$
\psi(s)= \int_{0}^{s}\int_{0}^{\tau_{1}} \ldots
\int_{0}^{\tau_{m-1}}\{ \left|E_{2}\right| \chi_{1}
-\left|E_{1}\right| \chi_{2}\}\,  d\sigma d{\tau_{m-1}} \ldots
d{\tau_{1}},
$$
 $0\leq \tau_{i} \leq  s \leq l, \, 1\leq i\leq
m-1,$ where $\chi_{j}$ denotes the indicator function defined by
 $$
 \chi_{j}(s)=
 \left\{
 \begin{array}{rll}
  1  & \mbox{ for } & s \in E_{j},
\\
  0  & \mbox{ for } & s \in [0, l]/E_{j}, \quad  j=1, 2 \mbox{ and }  0\leq s \leq
  l.
   \end{array}
    \right.
   $$
We have that $\psi^{(m-1)}$ is an absolutely continuous function in
$[0, l]$ with $\psi^{(m-1)}(0)= \psi^{(m-1)}(l)= 0$. Moreover,
$$
 \psi^{(m)}(s)=
 \left\{
 \begin{array}{rll}
  -\left|E_{1}\right| & \mbox{ $a.e$ } & s \in E_{2} \, ,
\\
 \left|E_{2}\right| & \mbox{ $a.e$ } & s \in E_{1} \, , \\
0 & \mbox{ $a.e$ } & s \in [0, l]-E_{1} \bigcup E_{2} \, .
   \end{array}
    \right.
   $$
We also define $C_{\alpha}: t= t_{\alpha}(s)$, $x= X_{\alpha}(s)$,
$0 \leq s  \leq l$, by setting
$$
t_{\alpha}(s)= t(s)+ \sum_{i=1}^{m}\alpha^{i} \psi^{(i-1)} (s) \, ,
$$
$$
X_{\alpha}(s)= X(s), \quad 0 \leq s \leq l, \quad
\left|\alpha\right|\leq 1 \, .
$$
Let $\rho >0$ be chosen in such a way that  $t, \tau \in [a, b]$
and $\left|t-\tau\right|< \rho$ imply $|x(t)-x(\tau)|\leq \delta$,
where $\delta$ is the constant in condition $(S_{0})$. We now
choose $\alpha$ small enough, $|\alpha| \leq \alpha_{0}$, to give
$t_{\alpha}'(s)> 0$ for $s \in E_{1}\bigcup E_{2}$, and
$C_{\alpha}$ has an absolutely continuous representation
$x=x_{\alpha}(t)$, $a \leq t \leq b$. We also have
$|t_{\alpha}(s)-t(s)|< \rho$. Hence $|x_{\alpha}(t)-x(t)|=
|x(t_{\alpha}(s))-x(t(s))|< \delta$ and we conclude that
$J[C_{\alpha}]\geq J[C]$.  On the other hand, by setting
$\phi(\alpha, s) = F(t, X, t', X', \ldots t^{(m)}, X^{(m)})$, we
have by differentiation that
$$ \left.\frac{\partial \phi}{\partial
\alpha}\right|_{\alpha = 0}= \frac{\partial F}{\partial t} \psi +
\frac{\partial F}{\partial t'} \psi'+  \ldots + \frac{\partial
F}{\partial t^{(m)}} \psi^{(m)} \, ,
$$
where
\begin{equation}
\label{eq5}
\begin{gathered}
\frac{\partial F}{\partial t} = \frac{\partial \mathcal{L}}{\partial t}t'  \, ,  \\
\vdots \\
 \frac{\partial F}{\partial t^{(k)}}= t'\frac{\partial
\mathcal{L}}{\partial x^{(k)}} \sum_{i=1}^{k} \frac{\partial
P_{ik}}{\partial t^{(k)}}x^{(i)}+ \ldots + t'\frac{\partial
\mathcal{L}}{\partial x^{(m)}} \sum_{i=1}^{k} \frac{\partial
P_{im}}{\partial t^{(k)}}x^{(i)} \\
\vdots \\
\frac{\partial F}{\partial t^{(m-1)}}= t'\frac{\partial
\mathcal{L}}{\partial x^{(m-1)}} \sum_{i=1}^{m-1} \frac{\partial
P_{i(m-1)}}{\partial t^{(m-1)}}x^{(i)} + t'\frac{\partial
\mathcal{L}}{\partial x^{(m)}} \sum_{i=1}^{m}
\frac{\partial P_{im}}{\partial t^{(m-1)}}x^{(i)}  \\
 \frac{\partial F}{\partial t^{(m)}} = t'\frac{\partial
\mathcal{L}}{\partial x^{(m)}}  \sum_{i=1}^{m} \frac{\partial
P_{im}}{\partial t^{(m)}}x^{(i)}  \, .
\end{gathered}
\end{equation}
By hypotheses $(S_{i})_{0\leq i \leq n}$, both absolutes value of
terms $\frac{\partial F}{\partial t}\psi, \frac{\partial
F}{\partial t'}\psi', \ldots, \frac{\partial F}{\partial
t^{(m)}}\psi^{(m)} $ are bounded  in $E_{1} \bigcup E_{2}$  by a
fixed function which is $L-$integrable in $[0, l]$. Then, we can
differentiate under the sign of the integral to obtain:
$$
0= \left.\frac{\partial J(C_{\alpha})}{d \alpha}\right|_{\alpha= 0}
= \int_{0}^{l}\left( \frac{\partial F}{\partial t} \psi +
\frac{\partial F}{\partial t'} \psi'+ \ldots, \frac{\partial
F}{\partial t^{(m)}} \psi^{(m)}\right)ds \, .
$$
Integration by parts, and using \eqref{eq3}--\eqref{eq4}, yields
\begin{equation*}
\begin{split}
0 &= \int_{0}^{l} \phi_{0}(s)\psi^{(m)} ds =
\int_{E_{1}}\phi_{0}(s)\psi^{(m)} ds
+ \int_{E_{2}}\phi_{0}(s)\psi^{(m)} ds \\
& \leq |E_{2}| |E_{2}|(d_{1}-d_{2}) < 0
\end{split}
\end{equation*}
which is a contradiction. Equality \eqref{eq2} is proved.
\end{proof}


\section{Main results}

In \S\ref{sec:Reg} we obtain a new regularity result which implies
the validity of the classical Euler-Lagrange necessary condition.
In \S\ref{subsec:EL} a new Euler-Lagrange necessary condition is
proved which is valid both for regular and non-regular minimizers.


\subsection{Regularity for autonomous problems}
\label{sec:Reg}

We shall present now a regularity result for \eqref{Pm} under
certain additional requirements on the Lagrangian $\mathcal{L}$.

\begin{theorem}
\label{theorem4} In addition to the hypotheses $(S_{i})_{0\leq i
\leq n}$, let us consider the autonomous problem \eqref{Pm},
\textrm{ i.e.} let us assume that $\mathcal{L}$ does not depend on
$t$: $\mathcal{L} = \mathcal{L}(x,\dot{x}, \ldots, {x}^{(m)})$. If
$\frac{\partial \mathcal{L}}{\partial {x}^{(m)}}$ is superlinear
with respect to the sum of the derivatives $x^{(i)}, i=1, \ldots,
m$, \textrm{i.e.} there exist constants $a
>0$ and $b
> 0$ such that
\begin{equation}
\label{H4} a \sum_{i=1}^{m}|x^{(i)}|+ b \leq \left|\frac{\partial
\mathcal{L}}{\partial x^{(m)}}(x, \dot{x}, \ldots,
x^{(m)})\right| \, ,
\end{equation}
then every minimizer $x \in W^{m}_{m}$ of the problem is on
$W_{m}^{\infty}$.
\end{theorem}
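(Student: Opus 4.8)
The plan is to use the generalized duBois-Reymond equation \eqref{eq2} from Theorem~\ref{theorem1}, specialized to the autonomous case, to extract an a priori bound on $|x^{(m)}(t)|$ at almost every point where the reparameterized derivatives exist, and then to show this forces $x \in W_m^\infty$. First I would observe that, since $\mathcal{L}$ does not depend on $t$, we have $\frac{\partial F}{\partial t} = \frac{\partial \mathcal{L}}{\partial t}\, t' = 0$, so the innermost $m$-fold iterated integral in \eqref{eq2} vanishes; more importantly, one should track which of the remaining terms $\frac{\partial F}{\partial t^{(i)}}$ survive and how they depend on the $X^{(j)}$. From the formulas \eqref{eq5}, each $\frac{\partial F}{\partial t^{(i)}}$ is $t'$ times a sum of products of $\frac{\partial \mathcal{L}}{\partial x^{(k)}}$ with $\sum_{i}\frac{\partial P_{ik}}{\partial t^{(i)}}x^{(i)}$. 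The key structural point is that the leading term of \eqref{eq2}, namely $\frac{\partial F}{\partial t^{(m)}}$, equals $t'\,\frac{\partial \mathcal{L}}{\partial x^{(m)}}\cdot \frac{\partial P_{mm}}{\partial t^{(m)}}\,X^{(m)}$ plus lower-order pieces, and $\frac{\partial P_{mm}}{\partial t^{(m)}}$ is an explicit nonzero function of $t'$ alone (for $m=2$ it contributes the factor $1/t'^2$). So after multiplying through by the appropriate power of $t'$ one gets an identity in which $\frac{\partial \mathcal{L}}{\partial x^{(m)}}$ appears multiplied by $X^{(m)}$.

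Next I would pass back from the arc-length parameterization to the original variable $t$ and translate \eqref{eq2} into a pointwise statement about $x$. Along the curve, $x^{(i)}(t)$ corresponds (via the $P_{ik}$) to combinations of $X^{(j)}(s)/t'(s)^{\text{powers}}$, and the $X^{(j)}$ are bounded by $1$ (Lipschitz constant one). Hence on the set where $x^{(m)}(t)$ is large, $t'(s)$ must be small, and \eqref{eq2} — being a bounded (indeed continuous, in fact constant) function of $s$ — controls $\left|\frac{\partial \mathcal{L}}{\partial x^{(m)}}\right|\cdot|X^{(m)}|$ by a constant plus iterated integrals of the other $\frac{\partial F}{\partial t^{(i)}}$, each of which is itself bounded by an $L^{m'}$ function by hypotheses $(S_i)$. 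Combining this bound on $\left|\frac{\partial \mathcal{L}}{\partial x^{(m)}}\right|$ with the superlinearity hypothesis \eqref{H4}, namely $a\sum_{i=1}^m |x^{(i)}| + b \le \left|\frac{\partial \mathcal{L}}{\partial x^{(m)}}\right|$, I would get that $\sum_{i=1}^m |x^{(i)}(t)|$ is bounded by a constant wherever the reparameterized derivatives exist; since the derivatives exist a.e. and this bound is uniform, the minimizer lies in $W_m^\infty$.

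The main obstacle I anticipate is the bookkeeping in the second step: making the translation from the curve-space identity \eqref{eq2} back to a clean pointwise inequality for $|x^{(m)}(t)|$, because the functions $P_{ik}(t',\dots,t^{(k)})$ and their partials $\frac{\partial P_{ik}}{\partial t^{(j)}}$ are complicated, and one must verify that the "lower-order" terms (those involving $X^{(j)}$ for $j<m$ and higher $t$-derivatives) really are dominated — i.e. that they do not themselves blow up faster than the leading term as $t' \to 0$. One needs the homogeneity structure of the $P_{ik}$ in the $t$-derivatives to see that every term scales so that, after clearing denominators, the leading contribution of $\frac{\partial \mathcal{L}}{\partial x^{(m)}}\cdot X^{(m)}$ dominates, or at least can be isolated. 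A secondary technical point is handling the null set of $s$ where $(t(s),X(s),t'(s),\dots)$ fails to exist, but this is routine given that \eqref{eq2} already holds as an a.e. identity and that $t'(s)>0$ a.e. Once these are settled, the contradiction-free conclusion $x\in W_m^\infty$ follows by the superlinearity estimate as above.
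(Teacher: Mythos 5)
Your overall strategy is the paper's: specialize the duBois-Reymond identity \eqref{eq2} of Theorem~\ref{theorem1} to the autonomous case, use the formulas \eqref{eq5} to isolate the leading term $t'\frac{\partial \mathcal{L}}{\partial x^{(m)}}\sum_{i=1}^{m}\frac{\partial P_{im}}{\partial t^{(m)}}x^{(i)}$, control the remaining iterated integrals via $(S_i)$ and H\"older, and finish with the superlinearity hypothesis \eqref{H4}. One technical divergence: the paper does \emph{not} bound every integral term by a constant. It keeps the single integral containing $\frac{\partial\mathcal{L}}{\partial x^{(m)}}\sum_i x^{(i)}$ --- the very quantity being estimated --- on the right-hand side, arrives at an inequality of the form $u(s)\le c_3+c_4\int_0^s u$ for $u=\bigl|\frac{\partial\mathcal{L}}{\partial x^{(m)}}\sum_{i=1}^m x^{(i)}\bigr|$, and closes the estimate with Gronwall's lemma. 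Your plan of bounding everything directly (the integrands are products of an $L^{m'}$ bound from $(S_i)$ with $L^m$ derivatives, hence $L^1$, hence their primitives are bounded) is plausibly workable and would bypass Gronwall; but note that "bounded by an $L^{m'}$ function," as you write, is not by itself a uniform bound --- you need to integrate first.

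The genuine gap is in your concluding step. You obtain a bound on the \emph{product} $\bigl|\frac{\partial\mathcal{L}}{\partial x^{(m)}}\bigr|\cdot|X^{(m)}|$ and then invoke "this bound on $\bigl|\frac{\partial\mathcal{L}}{\partial x^{(m)}}\bigr|$" together with \eqref{H4}; that is, you implicitly divide by $|X^{(m)}|$, which fails wherever $X^{(m)}$ is small, and nothing excludes that. The way \eqref{H4} is actually exploited is different: one keeps the product intact, $\bigl|\frac{\partial\mathcal{L}}{\partial x^{(m)}}\sum_i x^{(i)}\bigr|\le c_5$, substitutes \eqref{H4} into the first factor to obtain $\bigl(a\sum_{i=1}^m|x^{(i)}|+b\bigr)\bigl|\sum_{i=1}^m x^{(i)}\bigr|\le c_5$, and reads off boundedness of $\sum_i|x^{(i)}|$ from the fact that the left-hand side is quadratic in the derivatives. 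Your sketch never forms this quadratic inequality, so the final deduction does not go through as stated. (The other issues you flag --- the homogeneity of the $P_{ik}$ in the $t$-derivatives and the null set where the reparameterized derivatives fail to exist --- are treated no more carefully in the paper than in your sketch, so I would not count them against you.)
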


\begin{corollary}
Under the hypotheses of Theorem~\ref{theorem4}, the autonomous
problem of the calculus of variations with higher-order
derivatives do not admit the Lavrentiev gap
$W^{m}_{m}-W_{m}^{\infty}$:
\begin{multline*}
\inf_{x(\cdot) \in W^{m}_{m}} \int_{a}^{b}
\mathcal{L}\left(x(t),\dot{x}(t) , \ldots,  {x}^{(m)}(t)\right) dt \\
= \inf_{x(\cdot) \in W_{m}^{\infty}} \int_{a}^{b}
\mathcal{L}\left(x(t),\dot{x}(t) , \ldots, {x}^{(m)}(t)\right) dt \, .
\end{multline*}
\end{corollary}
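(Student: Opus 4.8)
The plan is to read off the equality of the two infima directly from the regularity conclusion of Theorem~\ref{theorem4}, treating that theorem as a black box rather than reproving it. Write $I_{m} = \inf_{x(\cdot) \in W^{m}_{m}} I[x(\cdot)]$ and $I_{\infty} = \inf_{x(\cdot) \in W_{m}^{\infty}} I[x(\cdot)]$ for the two quantities in the statement. Because $W_{m}^{\infty} \subset W^{m}_{m}$, enlarging the class of competitors can only lower the infimum, so $I_{m} \leq I_{\infty}$ is immediate. All the content of the corollary is therefore the reverse inequality $I_{\infty} \leq I_{m}$, and the absence of a Lavrentiev gap amounts to showing that passing to the smaller class $W_{m}^{\infty}$ does not raise the optimal value.

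To prove $I_{\infty} \leq I_{m}$, first I would produce an actual minimizer in the large class. Within the Tonelli framework underlying the paper, the coercivity hypothesis (H3) together with the $C^{1}$ regularity and convexity of $\mathcal{L}$ yields, by Tonelli's existence theorem quoted in the introduction, a minimizer $x^{*}(\cdot) \in W^{m}_{m}$ with $I[x^{*}(\cdot)] = I_{m}$. Next I would feed this specific $x^{*}(\cdot)$ into Theorem~\ref{theorem4}: the hypotheses $(S_{i})_{0 \leq i \leq n}$ and the superlinearity condition \eqref{H4} are exactly those assumed here, so the theorem forces $x^{*}(\cdot) \in W_{m}^{\infty}$. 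Thus $x^{*}(\cdot)$ is itself an admissible competitor in the smaller class, whence $I_{\infty} \leq I[x^{*}(\cdot)] = I_{m}$. Combining with the trivial bound gives $I_{m} = I_{\infty}$, which is precisely the claimed non-occurrence of the gap $W^{m}_{m} - W_{m}^{\infty}$.

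The one point that genuinely needs care --- and the main obstacle --- is the attainment step, since the conclusion of Theorem~\ref{theorem4} is a statement about minimizers and is vacuous unless a minimizer exists. I would therefore make explicit that existence is available, either by carrying coercivity (H3) as a standing assumption of the corollary or by verifying that \eqref{H4} itself delivers it: a linear lower bound on $\left|\frac{\partial \mathcal{L}}{\partial x^{(m)}}\right|$, combined with convexity of $\mathcal{L}$ in $x^{(m)}$, forces at least quadratic growth of $\mathcal{L}$ in $x^{(m)}$, i.e. quadratic coercivity in the sense of (H3). This is the only computation I expect to write out in full; once attainment of $I_{m}$ is secured, the transfer of the optimal value into $W_{m}^{\infty}$ via Theorem~\ref{theorem4} is immediate and the corollary follows with no further estimates.
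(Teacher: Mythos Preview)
Your argument is correct and is exactly the intended deduction: the paper does not write a separate proof of the corollary at all. The \texttt{proof} environment appearing immediately after the corollary in the paper is in fact the proof of Theorem~\ref{theorem4} (it manipulates the duBois--Reymond identity \eqref{eq2}, applies Gronwall, and uses \eqref{H4} to bound the derivatives), and the corollary is stated as an immediate by-product. Your two-line derivation---$I_m \le I_\infty$ from the inclusion $W_m^\infty \subset W_m^m$, and $I_\infty \le I[x^*] = I_m$ by feeding a $W_m^m$ minimizer through Theorem~\ref{theorem4}---is precisely what the paper leaves implicit.

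Your additional remark about attainment is well taken and is something the paper glosses over: Theorem~\ref{theorem4} is phrased as a statement about minimizers, so the corollary as written tacitly assumes the infimum over $W_m^m$ is achieved. The paper regards this as supplied by the Tonelli existence theorem (hypotheses (H1)--(H3)) recalled in the introduction, but it does not restate (H2)--(H3) among the hypotheses of Theorem~\ref{theorem4}. Your observation that the superlinearity condition \eqref{H4} combined with convexity in $x^{(m)}$ forces at least quadratic coercivity is a nice way to close this loop; the paper does not make that argument explicitly.
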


\begin{proof}
Using  \eqref{eq2}, \eqref{eq5}, the fact that we consider the
autonomous case, and applying Holder's inequality, we get
\begin{equation*}
\begin{gathered}
\left|t'\frac{\partial \mathcal{L}}{{\partial
{x}^{(m)}}}\sum_{i=1}^{m}\frac{\partial {P}_{im}}{{\partial
{t}^{(m)}}}x^{(i)} \right|  \leq c_{0}+ c_{1}
 + \left|t' \int_{0}^{s}\sum_{i=1}^{m}\frac{\partial P_{im} }{\partial t^{(m-1)}}x^{(i)}
 \frac{\partial \mathcal{L}}{\partial x^{(m)}} \right| \\
 +  \left|  \int_{0}^{s}t' \sum_{i=1}^{m-1}\frac{\partial P_{i(m-1)} }{\partial t^{(m-1)}}x^{(i)}
 \frac{\partial \mathcal{L}}{\partial x^{(m-1)}} \right|,
 \end{gathered}
\end{equation*}
for positive constants $c_{0}$ and $c_{1}$. Therefore, with the aid
of the condition $(S_{i})$, we have
\begin{equation*}
\begin{gathered}
\left|\frac{\partial \mathcal{L}}{{\partial
{x}^{(m)}}}\sum_{i=1}^{m}x^{(i)} \right|  \leq c_{3}
 +  c_{4}\int_{0}^{s}\left| \sum_{i=1}^{m}{x}^{(i)}
 \frac{\partial \mathcal{L}}{\partial x^{(m)}} \right|
 \end{gathered}
\end{equation*}
where $c_{3}$ and $c_{4}$ are positive constants. Then, using the
fact that $\mathcal{L}\in C^{1}$, $\mathcal{L}$, $\frac{\partial
\mathcal{L}}{\partial \dot{x}}$, $\ldots$,  $\frac{\partial
\mathcal{L}}{\partial {x}^{(m)}}$ $\in L^{m'}$ and $x \in W_{m}^{m}$
(in other terms, $x, \dot{x}, \ldots, {x}^{(m)}\in L^{m}$), it
follows by the Gronwall lemma that $ \frac{\partial
\mathcal{L}}{\partial {x}^{(m)}}\sum_{i=1}^{m}{x}^{(i)}$ satisfies a
condition of the form
$$
\left|\frac{\partial \mathcal{L}}{{\partial
{x}^{(m)}}}\sum_{i=1}^{m}x^{(i)} \right|  \leq c_{5}
$$
for a certain  positive constant $c_{5}$. Besides, since
$\frac{\partial \mathcal{L}}{\partial {x}^{(m)}}$ verifies
\eqref{H4}, we have
$$
\left(a\sum_{i=1}^{m}|x^{(i)}|+ b\right)\left|\sum_{i=1}^{m}x^{(i)}\right|
\leq c_{5} \quad (b>0) \, .
$$
Therefore, $\sum_{i=1}^{m}|x^{(i)}|$  and
$\frac{\partial \mathcal{L}}{\partial {x}}$ are uniformly bounded.
This implies that $|{x}^{(i)}|$, $1\leq i \leq m$, are essentially bounded.
\end{proof}


\subsection{Generalized Euler-Lagrange equation}
\label{subsec:EL}

If Theorem~\ref{theorem4} holds, then one can use the classical
Euler-Lagrange equation to obtain the minimizers. If this is not
the case, \textrm{i.e.} Theorem~\ref{theorem4} does not hold, it
may happen that minimizers fail to satisfy the standard
Euler-Lagrange equations. Next we give a generalized
Euler-Lagrange necessary condition which is valid in the class of
functions $W^{m}_{m}$ where existence is proved.

\begin{theorem}
\label{theorem2} Under the hypotheses $(S_{i})_{0\leq i \leq n}$, if
$x(\cdot) \in W_m^m$ is a minimizer of problem \eqref{Pm}, then we
have the following integral form of the Euler-Lagrange equations:
\begin{align}
&\phi_{i}(s)= \frac{\partial F}{\partial x_{i}^{(m)}}
 + \sum_{j=1}^{m} (-1)^{m-j+1}\int_{0}^{s} \int_{0}^{\tau_{1}} \ldots
  \int_{0}^{\tau_{m-j}} \frac{\partial F}{\partial x_{i}^{(j-1)}}d\sigma \,
    d\tau_{m-j} \ldots d\tau_{1} =c_{i} , \label{eq6}
\end{align}
where functions $\frac{\partial F}{\partial {x}_{i}^{(j)}}$
are evaluated at $\left(t(s), X(s),
t'(s), X'(s), \ldots , t^{(m)}(s),\right.$
$\left.X^{(m)}(s)\right)$, $c_{i}$
denote constants, $i = 1,\ldots,n$, and
\begin{equation*}
\begin{split}
\frac{\partial F}{\partial {x}_{i}} &=
 t' \frac{\partial \mathcal{L} }{\partial {x}_{i}} \\
 \frac{\partial
F}{\partial \dot{x}_{i}} &= \frac{\partial \mathcal{L}}{\partial
\dot{x}_{i}} +t'\left (P_{12}\frac{\partial \mathcal{L}}{\partial
\ddot{x}_{i}}+ \ldots+  P_{1m}\frac{\partial
\mathcal{L}}{\partial {x}_{i}^{(m)}}\right )\, ,\\
& \vdots \\
 \frac{\partial F}{\partial {x}^{(k)}_{i}} &=t' \left (
P_{kk}\frac{\partial \mathcal{L}}{\partial
x_{i}^{(k)}}+P_{k(k+1)}\frac{\partial \mathcal{L}}{\partial
x_{i}^{(k+1)}}+ \ldots + P_{km}\frac{\partial \mathcal{L}}{\partial
x_{i}^{(m)}}\right ) \, , \\
& \vdots \\
 \frac{\partial F}{\partial {x}^{(m)}_{i}} &=
t'P_{mm}\frac{\partial \mathcal{L}}{\partial x_{i}^{(m)}} \, .
\end{split}
\end{equation*}
\end{theorem}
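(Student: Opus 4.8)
The plan is to mimic exactly the argument used to prove Theorem~\ref{theorem1}, but now perturbing the curve $C$ in the $x$-direction rather than in the $t$-direction. First I would fix an index $i \in \{1,\dots,n\}$ and argue by contradiction: suppose \eqref{eq6} fails for this $i$, so that $\phi_i(s)$ is not a.e.\ constant. Then there are constants $d_1 < d_2$ and disjoint measurable sets $E_1, E_2 \subset [0,l]$ of positive measure on which $\phi_i(s) \le d_1$ and $\phi_i(s) \ge d_2$ respectively; since $t'(s) > 0$ a.e., we may further shrink $E_1, E_2$ so that $t'(s) \ge k > 0$ there, just as in \eqref{eq3}--\eqref{eq4}. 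I would then define the scalar function
\[
\psi(s) = \int_0^s \int_0^{\tau_1} \cdots \int_0^{\tau_{m-1}} \bigl\{ |E_2|\,\chi_1 - |E_1|\,\chi_2 \bigr\}\, d\sigma\, d\tau_{m-1} \cdots d\tau_1 ,
\]
which has $\psi^{(m-1)}$ absolutely continuous with $\psi^{(m-1)}(0) = \psi^{(m-1)}(l) = 0$ and hence $\psi^{(j)}(0) = \psi^{(j)}(l) = 0$ for $0 \le j \le m-1$, exactly as before; this guarantees that the perturbed curve respects the boundary conditions \eqref{eq:boundaryCond}.

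Next I would perturb only the $i$-th component of $X$, keeping $t(s)$ and all other components fixed: set $X_{\alpha}(s) = X(s) + \alpha \psi(s)\, e_i$ (where $e_i$ is the $i$-th coordinate vector), $t_\alpha(s) = t(s)$. For $|\alpha|$ small the increments $|\alpha \psi^{(k)}(s)|$ are bounded by $\delta$ on $E_1 \cup E_2$, so hypothesis $(S_i)$ applies and legitimizes differentiating $J[C_\alpha]$ under the integral sign at $\alpha = 0$. Since $X_\alpha$ is admissible, $J[C_\alpha] \ge J[C]$, so the first variation vanishes:
\[
0 = \left.\frac{d}{d\alpha} J[C_\alpha]\right|_{\alpha=0} = \int_0^l \left( \frac{\partial F}{\partial x_i}\,\psi + \frac{\partial F}{\partial \dot x_i}\,\psi' + \cdots + \frac{\partial F}{\partial x_i^{(m)}}\,\psi^{(m)} \right) ds ,
\]
where the partial derivatives of $F$ are the ones displayed in the statement (obtained by the chain rule through the reparameterization, noting that $P_{jk}$ depends only on $t',\dots,t^{(k)}$ and not on $x$ or its derivatives, which is why no $\partial P/\partial x^{(k)}$ terms appear). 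Now I would integrate by parts $m$ times, moving all derivatives off $\psi$ and onto the $F$-terms; the boundary terms all vanish because $\psi^{(j)}$ vanishes at $0$ and $l$ for $j \le m-1$, and the iterated integrals that accumulate are precisely those defining $\phi_i(s)$. This collapses the identity to $0 = \int_0^l \phi_i(s)\, \psi^{(m)}(s)\, ds$, and since $\psi^{(m)} = |E_2|\chi_1 - |E_1|\chi_2$ a.e., the right side equals $|E_2|\int_{E_1}\phi_i - |E_1|\int_{E_2}\phi_i \le |E_2||E_1| d_1 - |E_1||E_2| d_2 = |E_1||E_2|(d_1 - d_2) < 0$, a contradiction. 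Hence \eqref{eq6} holds for each $i$.

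The main obstacle, and the step requiring the most care, is the bookkeeping in the repeated integration by parts: one must verify that each of the $m$ antiderivative operations produces exactly the nested-integral pattern $\int_0^s \int_0^{\tau_1}\cdots\int_0^{\tau_{m-j}} \frac{\partial F}{\partial x_i^{(j-1)}}$ with the correct sign $(-1)^{m-j+1}$, and that no boundary contributions survive. The integrability needed to apply Fubini and to justify the differentiation under the integral sign is supplied by hypothesis $(S_i)$ together with $\psi^{(j)} \in L^\infty$ and $t' \in L^1$, so that all the products $\frac{\partial F}{\partial x_i^{(j)}}\psi^{(j)}$ are dominated on $E_1 \cup E_2$ by a fixed $L^1[0,l]$ function; this parallels the corresponding estimate in the proof of Theorem~\ref{theorem1} and needs only to be restated in the present notation.
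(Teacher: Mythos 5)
Your proposal follows essentially the same route as the paper's proof: argue by contradiction via sets $E_{1},E_{2}$ of positive measure where $\phi_i$ separates, use the same iterated-integral test function $\psi$, perturb only the $i$-th component of $X$ while keeping $t(s)$ fixed, invoke $(S_i)$ to differentiate under the integral sign, and integrate by parts $m$ times to reach $0=\int_0^l\phi_i\psi^{(m)}\,ds\le |E_1||E_2|(d_1-d_2)<0$. The only (cosmetic, and arguably cleaner) difference is that you take the linear perturbation $\alpha\psi(s)e_i$ where the paper writes $\sum_{k=1}^{m}\alpha^{k}\psi^{(k-1)}(s)$ in the $i$-th component; both give the same first variation at $\alpha=0$, so the argument is the same.
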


\begin{example}
\label{ex:CV90} Let us consider the autonomous problem proposed in
\cite{cla,clavin90} ($n=1$, $m = 2$): $\mathcal{L}(s, v, w)=
\left|s^{2}-v^{5}\right|^{2}|w|^{22}+\varepsilon |w|^{2}$, $t \in
[0,1]$. The problem satisfies hypotheses (H1)-(H3) of Tonelli's
existence theorem. Function $\tilde{x}(t)= k t^{\frac{5}{3}}$
verifies the integral form of the Euler-Lagrange equations
\eqref{eq6}. However, $\tilde{x}$ belongs to $W^{2}_{2}$ but not
to $W_{2}^{\infty}$. The regularity condition \eqref{H4} of
Theorem~\ref{theorem4} is not satisfied.
\end{example}

\begin{proof}
The proof is by contradiction and is analogous to that of
Theorem~\ref{theorem1}. Suppose that \eqref{eq6} is not satisfied.
For $i=1, \ldots, n$ and $|\alpha| \leq 1$, we consider the curve
$C_{\alpha}: t= t_{\alpha}(s)$, $x= X_{\alpha}(s)$, $0 \leq s \leq
l$, with
\begin{gather*}
X_{i \alpha}(s) = X_{i}(s) + \sum_{i=1}^{m}\alpha^{i} \psi^{(i-1)}(s) \, ,\\
X_{j \alpha}(s)= X_{j}(s), \quad j \neq i \, ,
\end{gather*}
$\psi$ defined as in the proof of Theorem~\ref{theorem1}.
We have $|\psi^{(m)}(s)| \leq l$ $a.e$ and we choose $\alpha$ small
enough in order to verify
$$
\left|X^{(k)}_{i \alpha}(s)-X^{(k)}_{i}(s)\right|  \leq \delta \, .
$$
Thus, $J[C] \leq J[C_{\alpha}]$ for all  $|\alpha|\leq
\alpha_{0}$. Setting
$$\phi(\alpha, s)=  F(t(s), X(s), t'(s), X'(s), \ldots,
 t^{(m)}(s), X^{(m)}(s))
$$
we have
$$ \left.\frac{\partial \phi}{\partial
\alpha}\right|_{\alpha = 0}= \frac{\partial F}{\partial x_{i}} \psi
 + \frac{\partial F}{\partial \dot{x}_{i}} \psi' + \ldots +
\frac{\partial F}{\partial x^{(m)}_{i}} \psi^{(m)}, \mbox{ for }
s\in [0, l] \quad a.e.
$$
By the hypotheses $(S_{i})_{0 \leq i \leq n}$ we have for
$\alpha \leq \alpha_{0}$ that $\frac{\partial \phi}{\partial \alpha}$ is,
in absolute value, bounded in $E_{1}\bigcup E_{2}$ by a
$L$-integrable function in $[0, l]$.  The proof  continues in the same lines
as in the end of the proof of Theorem~\ref{theorem1}, applying the
usual rule of differentiation under the integral sign and
integration by parts, which leads to a contradiction.
\end{proof}


\section{Conclusions}

The search for appropriate conditions on the data of the problems
of the calculus of variations with higher-order derivatives, under
which we have regularity of solutions or under which more general
necessary conditions hold, is an important area of study. In this
paper we generalize our previous results \cite{siddel} to problems
of the calculus of variations of higher order than two. We have
proved duBois-Reymond and Euler-Lagrange type necessary optimality
conditions valid in the class of functions where the existence is
proved. Minimizers in this class may have unbounded derivatives
and fail to satisfy the classical necessary conditions of
duBois-Reymond or Euler-Lagrange. We prove that if the derivatives
of the Lagrangian function with respect to the highest derivatives
are superlinear or coercive, then all the minimizers have
essentially bounded derivatives. This imply non-occurrence of the
Lavrentiev phenomenon and validity of classical necessary
optimality conditions.


\section*{Acknowledgements}

This work was supported by the \emph{Portuguese Foundation for
Science and Technology} (FCT), co-financed by the European
Community Fund FEDER/POCTI, through the \emph{Control Theory
Group} (cotg) of the \emph{Centre for Research in Optimization and
Control} (CEOC) of the University of Aveiro, and the project
SFRH/BPD/20934/2004.



\end{document}